\newcommand{\be}{\begin{equation}}
\newcommand{\ee}{\end{equation}}
\newcommand{\ba}{\begin{align}}
\newcommand{\ea}{\end{align}}
\newtheorem{theorem}{Theorem}[section]
\newtheorem{lemma}[theorem]{Lemma}
\newtheorem*{proposition*}{Proposition}
\newtheorem*{theorem*}{Theorem}
\newtheorem*{corollary*}{Corollary}
\newtheorem*{cor*}{Corollary}	
\theoremstyle{definition}
\newtheorem{claim}[theorem]{Claim}
\def\beq{\begin{equation}}\def\enq{\end{equation}}
\begin{document}

\title[Elementary Proofs of Recent Congruences]{Elementary Proofs of Recent Congruences for Overpartitions Wherein Non-Overlined Parts are Not Divisible by 6}

\author[B. Paudel]{Bishnu Paudel}

\author[J. Sellers]{James A. Sellers}

\author[H. Wang]{Haiyang Wang}

\address{Mathematics and Statistics Department\\
         University of Minnesota Duluth\\
         Duluth, MN 55812, USA}
\email{bpaudel@d.umn.edu, jsellers@d.umn.edu, wan02600@d.umn.edu}

\begin{abstract}
We define $\overline{R_l^*}(n)$ as the number of overpartitions of $n$ in which non-overlined parts are not divisible by $l$. In a recent work,  Nath, Saikia, and the second author established several families of congruences for $\overline{R_l^*}(n)$, with particular focus on the cases $l=6$ and $l=8$. In the concluding remarks of their paper, they conjectured that $\overline{R_6^*}(n)$ satisfies an infinite family of congruences modulo $128$. In this paper, we confirm their conjectures using elementary methods. Additionally, we provide elementary proofs of two congruences for $\overline{R_6^*}(n)$ previously proven via the machinery of modular forms by Alanazi, Munagi, and Saikia.
\end{abstract} 

\maketitle

\section{Introduction}
A {\it partition} of a positive integer $n$ is a finite non--increasing sequence of positive integers $(\lambda_1, \lambda_2, \dots, \lambda_k)$ whose sum equals $n$.  The integers $\lambda_1, \lambda_2, \dots, \lambda_k$ are called the {\it parts} of the partition.  As an example, the number of partitions of the integer $n=4$ is 5, and the partitions in question are
$$(4), \ \ \ (3,1), \ \ \ (2,2), \ \ \ (2,1,1), \ \ \ (1,1,1,1).$$
More information about integer partitions can be found in \cite{And1976, AE}.

One generalization of an integer partition is an {\it overpartition} of $n$ \cite{CL} which is a partition of $n$ wherein the first occurrence of a part may be overlined.  As an example, there are 14 overpartitions of  $n=4$:  
\begin{gather*}
(4), \ \ \ (\overline{4}), \ \ \ (3,1), \ \ \ (\overline{3},1), \ \ \ (3,\overline{1}), \ \ \ (\overline{3},\overline{1}), \ \ \ (2,2), \ \ \  (\overline{2},2),\\
(2,1,1), \ \ \  (\overline{2},1,1), \ \ \  (2,\overline{1},1), \ \ \  (\overline{2},\overline{1},1), \ \ \ (1,1,1,1), \ \ \ (\overline{1},1,1,1).
\end{gather*}
The number of overpartitions of $n$ is often denoted $\overline{p}(n)$; from the above we see that $\overline{p}(4) = 14$. 

Since the work of Corteel and Lovejoy \cite{CL}, a variety of restricted overpartition functions have been defined and analyzed.  As an example, Alanazi, Alenazi, Keith, and Munagi \cite{AAKM22} considered the family of functions $\overline{R_{\ell}^*}(n)$ which counts the number of overpartitions of weight $n$ wherein non-overlined parts are not allowed to be divisible by $\ell$ while there are no restrictions on the overlined parts.  For example, there are 12 overpartitions counted by $\overline{R_3^*}(4)$:  
\begin{gather*}
(4), \ \ \ (\overline{4}), \ \ \  (\overline{3},1), \ \ \ (\overline{3},\overline{1}), \ \ \ (2,2), \ \ \  (\overline{2},2),\\
(2,1,1), \ \ \  (\overline{2},1,1), \ \ \  (2,\overline{1},1), \ \ \  (\overline{2},\overline{1},1), \ \ \ (1,1,1,1), \ \ \ (\overline{1},1,1,1).
\end{gather*}
One can readily see that two overpartitions counted by $\overline{p}(4)$, namely $(3,1)$ and $(3,\overline{1})$, do not appear in the list above.  This is true because they contain a non-overlined part which is divisible by $\ell=3$.

In \cite{AAKM22}, Alanazi et al. proved a number of congruence properties satisfied by the functions $\overline{R_{\ell}^*}(n)$ which, for each $\ell$, satisfies the generating function identity 
\begin{equation*}
\sum_{n=0}^\infty\overline{R_{\ell}^*}(n)q^n =  \frac{f_2 f_{\ell}}{f_1^2}
\end{equation*}
where $$f_k = \prod_{m=1}^\infty (1-q^{km}).$$  
Subsequently, additional work on this family of functions has been completed; see \cite{Alanazi-Munagi-Saikia, nath-saikia-sellers, SP25, Sel25} for examples of such work.   

Our goal in this brief paper is to utilize truly elementary means to prove two different sets of results for the function $\overline{R_{6}^*}(n)$.  
First, we note the following theorem which combines the statements of two conjectures that recently appeared in the work of Nath, Saikia, and the second author \cite{nath-saikia-sellers}.


\begin{theorem}
\label{conj}
\cite[Conjecture 8.1 and Conjecture 8.2]{nath-saikia-sellers}\label{conj1}
    For all $n\geq0$ and $k\geq 0$, we have 
    \begin{equation}\label{R6183...128}
    \overline{R_6^*}\left(18\cdot3^{2k+1}n+\frac{153\cdot3^{2k}-1}{4}\right)\equiv0\pmod{128}.
    \end{equation}
\end{theorem}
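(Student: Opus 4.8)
The plan is to work entirely with the generating function $\sum_{n\ge 0}\overline{R_6^*}(n)q^n = \frac{f_2 f_6}{f_1^2}$ and to prove the congruence by extracting the relevant arithmetic progression through a sequence of $2$- and $3$-dissections, followed by an induction on $k$. Writing $c_k = \frac{153\cdot 3^{2k}-1}{4}$ and $A_k(q) = \sum_{n\ge 0}\overline{R_6^*}\!\left(18\cdot 3^{2k+1}n + c_k\right)q^n$, the goal becomes showing $A_k(q)\equiv 0 \pmod{128}$ for every $k\ge 0$. A direct computation gives the two facts that drive the induction: the step multiplies by $9$ from one level to the next, and the offsets satisfy $c_{k+1}=9c_k+2$. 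In particular every target exponent lies in the class $2\pmod 6$, so it is natural to first pass to $H(q)=\sum_n \overline{R_6^*}(6n+2)q^n$ and work inside $H$.

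First I would treat the base case $k=0$, namely $\overline{R_6^*}(54n+38)\equiv 0\pmod{128}$. Starting from $\frac{f_2 f_6}{f_1^2}$, I would use a $2$-dissection to separate the even and odd parts and then the standard $3$-dissections of $f_1$, $f_2$, and $\frac{1}{f_1^2}$ (writing $f_1$ in terms of $f_9$ together with its Borwein-type components, and reducing $\frac{f_2}{f_1^2}=\sum\overline p(n)q^n$ accordingly) to collect exactly the residue class $38\pmod{54}$. The outcome should be an explicit eta-quotient times a power of $2$, and the heart of the base case is to verify that this power is at least $2^7$. For this I would invoke the elementary $2$-adic congruences $f_1^{2}\equiv f_2$, $f_1^{4}\equiv f_2^{2}$, and more generally $f_m^{2^{j}}\equiv f_{2m}^{2^{j-1}}\pmod{2^{j}}$, to track the $2$-adic valuation of the coefficients of the dissected product.

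For the inductive step I would assume $A_k(q)\equiv 0\pmod{128}$ and deduce the same for $A_{k+1}$. Because the recurrence $c_{k+1}=9c_k+2$ has a non-integral fixed point, $A_{k+1}$ is \emph{not} simply one residue component of a $9$-dissection of $A_k$, so the induction cannot proceed by naive coefficient selection. Instead I would establish, by induction, a representation (exact over $\mathbb{Z}[[q]]$, or valid mod $128$) of $A_k$ as a concrete eta-quotient $E_k(q)$ whose $3$-dissection is self-similar, and then show that applying the appropriate $3$-dissection operator twice to $E_k$ and multiplying by an integer power series reproduces $E_{k+1}$. Since a dissection component inherits the coefficientwise divisibility of $E_k$, and multiplication by an integer series preserves it, the factor $128$ is carried from level $k$ to level $k+1$.

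The main obstacle is precisely this self-similar dissection identity: one must identify the correct intermediate generating function(s) and prove the eta-quotient recurrence $E_{k+1}=\Phi(E_k)$, since the shift forced by $c_{k+1}=9c_k+2$ means the naive dilation does not close and auxiliary series must be introduced and tracked simultaneously. A secondary but still delicate point is the base case: confirming that the dissected product is divisible by the full $2^7$, rather than a smaller power of $2$, requires careful bookkeeping of $2$-adic valuations through the $3$-dissections, and this is where most of the routine computation will concentrate.
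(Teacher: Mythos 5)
Your high-level architecture matches the paper's---dissect the generating function, reduce modulo $128$ via the binomial congruences $f_l^{p^k}\equiv f_{lp}^{p^{k-1}}\pmod{p^k}$, and run an induction on $k$ through self-similar $3$-dissections---and your diagnosis that $A_{k+1}$ is not a residue component of $A_k$ (since $c_{k+1}=9c_k+2$ forces a different class modulo $54$) is correct and is exactly why a naive induction fails. But the proposal stops at the point where the proof actually begins. The entire content lies in identifying the auxiliary series and proving their recurrences, and you explicitly defer this as ``the main obstacle.'' In the paper one first passes to $\sum\overline{R_6^*}(9n+2)q^n\equiv T_1+T_2\pmod{128}$ with $T_1=4f_2^3f_3^{26}/f_6^{13}$ and $T_2\equiv 64qf_1^3f_3^9$, and then proves two separate inductive claims: $\sum T_1\bigl(3^{2k+2}n+\tfrac{3^{2k+2}-1}{4}\bigr)q^n\equiv\pm32qf_1^3f_3^9-(16a+12)f_1^8f_3^{18}/(f_2f_6^9)$ and $\sum T_2\bigl(2\cdot3^{2k+2}n+\tfrac{3^{2k+2}-1}{4}\bigr)q^n\equiv\lambda\,64f_1^3+64qf_3^3f_6^3$. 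Note that these inductive objects are \emph{not} determined exactly: the sign, the integer $a$, and $\lambda\in\{0,1\}$ change from level to level, and the induction must be set up to tolerate and propagate this ambiguity. Nothing in your outline anticipates that the closed form will only be self-similar up to such parameters, and without that flexibility the recurrence $E_{k+1}=\Phi(E_k)$ you posit will not close.

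The second gap is the mechanism by which divisibility by $128$ arises. You propose to ``verify that the power of $2$ in front is at least $2^7$'' by tracking $2$-adic valuations, and to carry $128$ forward because ``a dissection component inherits coefficientwise divisibility.'' Neither matches what actually happens, and valuation bookkeeping alone will not succeed: the relevant series carry only prefactors of $4$, $32$, or $64$, and the auxiliary series above are \emph{not} identically $0\pmod{128}$. The missing factors come from showing that the specific residue class containing $l_{n,k}$ has \emph{empty support}. For $T_1$ at $k=0$ this uses Jacobi's identity $f_1^3=\sum_{m\ge0}(-1)^m(2m+1)q^{m(m+1)/2}$ together with the fact that $5$ is a quadratic nonresidue modulo $12$; for $T_1$ at $k\ge1$ it uses a $2$-dissection identity for $f_1/f_3^3$ to cancel terms and land in a series supported on even exponents; for $T_2$ it uses the fact that the $3$-dissection of $f_1^3$ contains no $q^{3n+2}$ component. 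These vanishing arguments, not a count of powers of $2$, are what close both the base case and the final extraction, and they are absent from the proposal.
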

Next, we mention a pair of congruences given by Alanazi, Munagi, and Saikia \cite[Theorem 4.4]{Alanazi-Munagi-Saikia}.  It is important to note that the authors proved these properties via an automated approach which relies on the machinery of modular forms; our goal here is to provide a classical proof for each of these congruences.
\begin{theorem}\label{TR36} For $n\geq0$, we have
	\begin{align}
      	\label{627n1164} \overline{R_6^*}(27n+11) &\equiv 0 \pmod{64}, \\
		\label{681n4724} \overline{R_6^*}(81n+47) &\equiv 0 \pmod{24}.
	\end{align}
\end{theorem}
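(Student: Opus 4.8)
The plan is to work directly with the generating function $\sum_{n\ge0}\overline{R_6^*}(n)q^n=\frac{f_2f_6}{f_1^2}$ and to isolate the required arithmetic progressions by iterating the $3$-dissection, since on the arithmetic-progression side every modulus is a power of $3$. Reading the shifts in base $3$, namely $11=(102)_3$ and $47=(1202)_3$, tells me precisely which residue class to retain at each stage: to reach $\overline{R_6^*}(27n+11)$ I successively keep the parts of exponent $\equiv2$, then $\equiv0$, then $\equiv1\pmod3$, while for $\overline{R_6^*}(81n+47)$ the path is $2,0,2,1$. A useful simplification at the first step is that $f_6$ is a function of $q^3$, so the initial $3$-dissection of $\frac{f_2f_6}{f_1^2}$ is just $f_6$ times the known $3$-dissection of the overpartition series $\frac{f_2}{f_1^2}$; after extracting the residue-$2$ component and renaming $q^3\mapsto q$ this factor $f_6$ becomes $f_2$. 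Throughout, the engine that produces the powers of $2$ is the family of elementary binomial congruences $f_d^{2^{a}}\equiv f_{2d}^{2^{a-1}}\pmod{2^{a}}$ (in particular $f_1^2\equiv f_2\pmod2$), which I use to reduce each intermediate eta-quotient modulo the target power of $2$.

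For \eqref{627n1164}, I would first record an explicit $3$-dissection of $\frac{f_2}{f_1^2}$ in terms of eta-quotients in $q^3$, using standard dissections of $f_1$ (as in Hirschhorn). Multiplying by $f_6$ and keeping the residue-$2$ part yields an explicit eta-quotient generating function for $\overline{R_6^*}(3n+2)$. Dissecting this a second time and retaining the residue-$0$ component gives the series for $\overline{R_6^*}(9n+2)$, and a third dissection, keeping the residue-$1$ component, produces $\sum_{n\ge0}\overline{R_6^*}(27n+11)q^n$. The claim to be verified is then purely $2$-adic: reducing this last eta-quotient modulo $64$ via the binomial congruences above, the factors of $2$ accumulated over the three dissection steps force every coefficient to vanish modulo $64$.

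For \eqref{681n4724} I would split the modulus as $24=8\cdot3$ and argue modulo $8$ and modulo $3$ separately, recombining by the Chinese Remainder Theorem. The modulo-$8$ assertion is handled exactly as above, now iterating the $3$-dissection four times along the residue path $2,0,2,1$ forced by $47=(1202)_3$ and tracking coefficients modulo $8$. For the modulo-$3$ assertion I would instead reduce first: from $f_1^3\equiv f_3\pmod3$ we get $\frac1{f_1^2}\equiv\frac{f_1}{f_3}\pmod3$, hence $\frac{f_2f_6}{f_1^2}\equiv\frac{f_1f_2f_6}{f_3}\pmod3$, and a short dissection of this much simpler eta-quotient should already annihilate the relevant progression modulo $3$.

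The main obstacle is the $2$-adic bookkeeping across the iterated dissections: each dissection replaces one eta-quotient by three new ones, and I must show that the single component I retain genuinely acquires enough factors of $2$ that, after three (respectively four) steps, the surviving series is divisible by $64$ (respectively $8$). Concretely, the technical heart is establishing the intermediate congruences that write $\overline{R_6^*}(3n+2)$, $\overline{R_6^*}(9n+2)$, and their successors as $2^{a}$ times an explicit integer eta-quotient series, and checking that the exponents $a$ add up to the required totals; pinning down the correct dissection identities for these successors is where the real effort lies.
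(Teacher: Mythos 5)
Your overall architecture---iterate the $3$-dissection along the base-$3$ digits of the shift, reduce each intermediate eta-quotient via the binomial congruences \eqref{modp^k}, and split $24=8\cdot 3$ with a separate modulo-$3$ argument---is exactly the paper's, and your first two dissection steps land where the paper lands: $\sum_{n\ge0}\overline{R_6^*}(9n+2)q^n\equiv 4f_2^3f_3^{10}/f_6^5\pmod{64}$ for \eqref{627n1164}, and $\equiv 4f_2^3\pmod{8}$ for the modulo-$8$ half of \eqref{681n4724}. The gap is in your account of the final step. After two dissections the accumulated power of $2$ in front is only $4$; the series $4f_2^3f_3^{10}/f_6^5$ has infinitely many coefficients that are exactly $4$ modulo $8$. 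So the claim that ``the factors of $2$ accumulated over the three dissection steps force every coefficient to vanish modulo $64$'' cannot be the mechanism, and no amount of further binomial reduction will upgrade that $4$ to a $64$. What actually annihilates the class $27n+11$ is lacunarity: the surviving series is $4f_2^3$ times a series in $q^3$, and $f_2^3$ simply has no terms $q^{3n+1}$. The paper proves this via Jacobi's identity \eqref{f13}: the exponents of $f_2^3$ are $m(m+1)$, and $m(m+1)\equiv 1\pmod 3$ would force $(2m+1)^2\equiv 5\pmod{12}$, impossible since $5$ is a quadratic nonresidue modulo $12$. (Equivalently, the $3$-dissection \eqref{f13d} of $f_2^3$ has components only in residue classes $0$ and $2$.) The same phenomenon, with $21$ a nonresidue modulo $36$, gives $\overline{R_6^*}(81n+47)\equiv 0\pmod 8$; your proposed four-fold dissection would likewise never produce more than a factor of $4$ by coefficient growth alone.

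If you carry out your third (respectively fourth) dissection on the reduced series you will in fact find that the retained residue component is identically zero---so your plan is salvageable---but you must identify and prove that this vanishing, a quadratic-residue statement about the support of $f_2^3$, is the engine of the proof rather than $2$-adic accumulation; as written the proposal attributes the crucial step to a mechanism that does not hold. Your modulo-$3$ argument, reducing $f_2f_6/f_1^2$ by $f_1^3\equiv f_3\pmod 3$ and dissecting the resulting simpler quotient, is sound and essentially what the paper does (the paper routes through $4f_2^4/f_1^2$ and the dissection \eqref{f22/f1_d}, ending with a series in $q^3$ that has no $q^{3n+1}$ terms).
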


In order to prove Theorems \ref{conj} and \ref{TR36}, we will need a few foundational results which already appear in the literature.  We gather all of the necessary results here.  We begin with a well--known identity of Jacobi.  
\begin{lemma}[Jacobi]
\label{lem:jacobi}
 We have
\begin{equation}
\label{f13} 
f_1^3=\sum_{m\geq0}(-1)^m(2m+1)q^{m(m+1)/2}.
\end{equation}
\end{lemma}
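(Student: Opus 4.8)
The plan is to deduce this from the Jacobi triple product identity, which in its triangular-number form reads
$$\prod_{m=1}^\infty (1-q^m)(1+zq^m)(1+z^{-1}q^{m-1}) = \sum_{n=-\infty}^\infty z^n q^{n(n+1)/2}.$$
One obtains this shape from the usual statement $\sum_n z^n q^{n^2}=\prod_m(1-q^{2m})(1+zq^{2m-1})(1+z^{-1}q^{2m-1})$ by the substitutions $q\mapsto q^{1/2}$ followed by $z\mapsto zq^{1/2}$. The goal is to specialize this two-variable identity so that the product side collapses to the cube $f_1^3$, and the driving idea is that the choice $z=-1$ turns each triple of factors $(1-q^m)(1+zq^m)(1+z^{-1}q^m)$ into $(1-q^m)^3$.

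The difficulty is that the boundary factor $(1+z^{-1}q^{m-1})$ contributes, at $m=1$, the term $(1+z^{-1})$, which vanishes at $z=-1$ and would annihilate the whole product if one simply substituted. To get around this, I would first peel off that factor, writing the product as $(1+z^{-1})\,G(z)$, where
$$G(z)=\prod_{m=1}^\infty(1-q^m)(1+zq^m)(1+z^{-1}q^m).$$
After multiplying the identity through by $z$, it becomes $(z+1)\,G(z)=\sum_{n=-\infty}^\infty z^{n+1}q^{n(n+1)/2}$. The point is that $G(z)$ is now analytic and nonvanishing near $z=-1$, with $G(-1)=\prod_m(1-q^m)^3=f_1^3$.

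The key step is to differentiate both sides in $z$ and then evaluate at $z=-1$. By the product rule the left side gives $G(z)+(z+1)G'(z)$, and the troublesome $(z+1)$ factor is killed on setting $z=-1$, leaving exactly $G(-1)=f_1^3$; this is why differentiation rather than naive substitution is required. Differentiating the right side term by term produces $\sum_n (n+1)z^n q^{n(n+1)/2}$, which at $z=-1$ equals $\sum_{n=-\infty}^\infty(-1)^n(n+1)q^{n(n+1)/2}$.

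Finally I would fold this bilateral sum into a one-sided sum. Since the exponent $n(n+1)/2$ is invariant under $n\mapsto -n-1$, I pair the term indexed by $n\ge0$ with the one indexed by $-n-1$; a short computation shows their coefficients are $(-1)^n(n+1)$ and $(-1)^n n$, which add to $(-1)^n(2n+1)$. This gives $f_1^3=\sum_{n\ge0}(-1)^n(2n+1)q^{n(n+1)/2}$, as claimed. The only points demanding care are the justification of term-by-term differentiation (immediate for $|q|<1$, where the product converges locally uniformly in $z$) and the bookkeeping in the folding; the genuinely clever move is the extraction of the $(1+z^{-1})$ factor, which is what makes the differentiation produce the cube cleanly.
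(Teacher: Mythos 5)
Your argument is correct, and it supplies an actual derivation where the paper simply defers to the literature: the paper's ``proof'' of Lemma \ref{lem:jacobi} is a one-line citation to Hirschhorn \cite[Equation (1.7.1)]{H2017}, so there is no in-paper argument to compare against. Your route is the classical one going back to Jacobi: specialize the triple product at $z=-1$ after extracting the vanishing boundary factor, and recover the surviving contribution by differentiating in $z$. All the delicate points check out. The substitutions $q\mapsto q^{1/2}$, $z\mapsto zq^{1/2}$ do yield the triangular-number form; peeling off $(1+z^{-1})$ and multiplying by $z$ gives $(z+1)G(z)=\sum_{n}z^{n+1}q^{n(n+1)/2}$ with $G$ analytic and equal to $f_1^3$ at $z=-1$; differentiation kills the $(z+1)G'(z)$ term at $z=-1$; and the folding under $n\mapsto -n-1$ (which fixes $n(n+1)/2$) combines the coefficients $(-1)^n(n+1)$ and $(-1)^{-n-1}(-n)=(-1)^n n$ into $(-1)^n(2n+1)$, exactly as claimed. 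Term-by-term differentiation is justified by locally uniform convergence for $|q|<1$ and $z$ bounded away from $0$. If one wanted to make the write-up fully self-contained one would also need to prove (or cite) the triple product itself, but as a reduction of \eqref{f13} to the triple product your proof is complete and is essentially the same derivation Hirschhorn gives in the cited source.
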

\begin{proof}
See Hirschhorn \cite[Equation (1.7.1)]{H2017}.
\end{proof}
Next, we share a pair of 2--dissection identities that will be useful in our work below.  
\begin{lemma}
 \begin{align}
\label{f1/f33}\frac{f_1}{f_3^3}&=\frac{f_2f_4^2f_{12}^2}{f_6^7}-q\frac{f_2^3f_{12}^6}{f_4^2f_6^9},\\
   \label{f13/f3}\frac{f_1^3}{f_3}&=\frac{f_4^3}{f_{12}}-3q\frac{f_2^2f_{12}^3}{f_4f_6^2}.
   \end{align}
   \end{lemma}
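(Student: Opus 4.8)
The plan is to prove each identity by reading off the $2$-dissection of the left-hand side directly. The first thing to notice is the parity structure of the proposed right-hand sides: in each case the first summand is a power series in $q^2$ (every product $f_{2k}$ is a series in $q^2$), while the second summand is $q$ times a power series in $q^2$. Consequently the two terms on the right are forced to be, respectively, the even part and the odd part of the series on the left, so it suffices to verify these two pieces separately. This observation both tells us that the answer splits cleanly and organizes the computation into two halves.

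To extract the even and odd parts I would use the substitution $q\mapsto -q$. Writing $F(q)$ for the left-hand side, the even part is $\tfrac12\bigl(F(q)+F(-q)\bigr)$ and the odd part is $\tfrac12\bigl(F(q)-F(-q)\bigr)$. The key computational input is the behaviour of the $f_k$ under this substitution: every even-indexed product is fixed, $f_{2k}(-q)=f_{2k}$, while a standard calculation gives
\begin{equation*}
f_1(-q)=\frac{f_2^3}{f_1 f_4},\qquad f_3(-q)=\frac{f_6^3}{f_3 f_{12}},
\end{equation*}
the second following from the first after $q\mapsto q^3$. Substituting these into $f_1^3/f_3$ and into $f_1/f_3^3$ turns each of $F(q)$ and $F(-q)$ into an explicit eta-quotient, and forming the half-sum and half-difference reduces the lemma to identities purely among eta-quotients. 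For instance, for \eqref{f13/f3} the even part becomes the claim
\begin{equation*}
\frac12\left(\frac{f_1^3}{f_3}+\frac{f_2^9 f_3 f_{12}}{f_1^3 f_4^3 f_6^3}\right)=\frac{f_4^3}{f_{12}},
\end{equation*}
with the odd part handled the same way, and \eqref{f1/f33} is treated identically.

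The main obstacle is precisely this last step: verifying the resulting eta-quotient identities without invoking the theory of modular forms, since the whole point of the paper is to stay elementary. I would clear denominators and reduce each identity to a statement about infinite products, and then confirm it by expanding the relevant factors—Lemma~\ref{lem:jacobi} supplies exactly such a series for $f_1^3$—or, more systematically, by peeling off the known elementary $2$-dissections of the Ramanujan theta functions $\varphi(-q)=f_1^2/f_2$ and $\psi(q)=f_2^2/f_1$ until only an identity between finite theta products remains, provable by the Jacobi triple product. I expect the bookkeeping in this reduction to be the most delicate part of the argument; it is, however, entirely mechanical once the substitution above has been carried out, and these particular dissections are recorded in Hirschhorn \cite{H2017}, against which the final computation can be checked.
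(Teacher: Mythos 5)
Your reduction is set up correctly, but the proof is not complete: it stops exactly where the substance of the lemma lies. The halving trick with $f_1(-q)=f_2^3/(f_1f_4)$ is the standard device (it is how Hirschhorn derives many $2$-dissections in \cite{H2017}), and your identification of the two summands on each right-hand side as the even and odd parts is sound. However, after the substitution, each of \eqref{f1/f33} and \eqref{f13/f3} becomes a pair of nontrivial eta-quotient identities; for example, clearing denominators in your displayed "even part" claim for \eqref{f13/f3} leaves
\begin{equation*}
f_{12}\left(f_1^6 f_4^3 f_6^3 + f_2^9 f_3^2 f_{12}\right) = 2\, f_1^3 f_3 f_4^6 f_6^3,
\end{equation*}
which is an identity of the same depth as the one you started with. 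Declaring this step "entirely mechanical" and gesturing at the Jacobi triple product or the dissections of $\varphi(-q)$ and $\psi(q)$ does not establish it: you would need to exhibit the actual chain of known identities (or a theta-series rearrangement) that proves each of the four resulting relations, and that is where all the work is. "Expanding the relevant factors" is likewise not a proof, since a power-series check to finite order verifies nothing.

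For comparison, the paper does not prove this lemma at all: it simply cites these two dissections as equations (29) and (30) of \cite[Lemma~1]{SS20}. So an acceptable answer here is either a citation to the literature or a genuinely complete derivation; your proposal is an honest outline of the latter but, as written, leaves the essential verification undone.
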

\begin{proof}
Equations~\eqref{f1/f33} and~\eqref{f13/f3} correspond to~(29) and~(30), respectively, in~\cite[Lemma~1]{SS20}.
\end{proof}
In analogous fashion, we also require several 3--dissection results which will be used in our generating function manipulations below.  
   \begin{lemma} We have
   \begin{align}
   \label{f12/f2}\frac{f_1^2}{f_2}&=\frac{f_9^2}{f_{18}}-2q\frac{f_3f_{18}^2}{f_6f_9},\\
   \label{f22/f1_d}\frac{f_2^2}{f_1}&=\frac{f_6f_9^2}{f_3f_{18}}+q\frac{f_{18}^2}{f_9},\\
   \label{f2/f12}\frac{f_2}{f_1^2}&=\frac{f_6^4f_9^6}{f_3^8f_{18}^3}+2q\frac{f_6^3f_9^3}{f_3^7}+4q^2\frac{f_6^2f_{18}^3}{f_3^6},\\
    \label{f1f2}f_1f_2&=\frac{f_6f_9^4}{f_3f_{18}^2}-qf_9f_{18}-2q^2\frac{f_3f_{18}^4}{f_6f_9^2},\\
   \label{f13d}f_1^3&=\frac{f_6f_9^6}{f_3f_{18}^3}-3qf_9^3+4q^3\frac{f_3^2f_{18}^6}{f_6^2f_9^3},\\
  \label{1/f13}\frac{1}{f_1^3}&=\frac{f_6^2f_9^{15}}{f_3^{14}f_{18}^6}+3q\frac{f_6f_9^{12}}{f_3^{13}f_{18}^3}+9q^2\frac{f_9^9}{f_3^{12}}+8q^3\frac{f_9^6f_{18}^3}{f_3^{11}f_6}+12q^4\frac{f_9^3f_{18}^6}{f_3^{10}f_6^2}\\
   \notag&+16q^6\frac{f_{18}^{12}}{f_3^8f_6^4f_9^3}.
   \end{align}
\end{lemma}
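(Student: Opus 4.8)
The plan is to treat the six identities not as six separate computations but as a small generating set together with elementary consequences. Every one of them is a $3$-dissection, i.e.\ each splits a series $\sum a_nq^n$ into its residue classes $n\equiv0,1,2\pmod 3$. I would designate \eqref{f12/f2}, \eqref{f22/f1_d}, and \eqref{f13d} as the primary identities and then deduce \eqref{f1f2}, \eqref{f2/f12}, and \eqref{1/f13} from them by pure series algebra, so that genuine work is confined to a single place.

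For the primaries I would expand each left-hand side as its classical theta series and sort the exponents modulo $3$. Writing the Ramanujan theta functions $\varphi(-q)=\sum_n(-1)^nq^{n^2}=f_1^2/f_2$ and $\psi(q)=\sum_{n\ge0}q^{n(n+1)/2}=f_2^2/f_1$, identity \eqref{f12/f2} is the $3$-dissection of $\varphi(-q)$ and \eqref{f22/f1_d} that of $\psi(q)$. Since $n^2\equiv 0$ or $1\pmod 3$, and the triangular numbers $T_n=n(n+1)/2$ satisfy $T_n\equiv1\pmod3$ exactly when $n\equiv1\pmod3$, in each case one residue class is immediate: the $n\equiv0$ class of $\varphi(-q)$ re-sums to $\varphi(-q^9)=f_9^2/f_{18}$, and the $n\equiv1$ class of $\psi(q)$ re-sums to $q\,\psi(q^9)=q\,f_{18}^2/f_9$. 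For \eqref{f13d} I would invoke Jacobi's identity \eqref{f13}: in $f_1^3=\sum_m(-1)^m(2m+1)q^{T_m}$ the terms with $m\equiv1\pmod3$ re-sum, after reapplying \eqref{f13} with $q\mapsto q^9$, to precisely $-3q\,f_9^3$, the middle term of \eqref{f13d}. In all three primaries the one remaining residue class is a single theta series that must be recognized as the displayed eta-quotient; this recognition, via the Jacobi triple product (or by direct citation of Hirschhorn \cite{H2017}), is the one genuinely content-bearing step.

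The three consequences then follow by elementary algebra. For \eqref{f1f2} I would use $f_1f_2=(f_1^2/f_2)(f_2^2/f_1)=\varphi(-q)\psi(q)$ and multiply \eqref{f12/f2} by \eqref{f22/f1_d}; collecting by residue, the $q^1$ coefficient collapses as $f_9f_{18}-2f_9f_{18}=-f_9f_{18}$, reproducing \eqref{f1f2} with no further input. For the reciprocals \eqref{f2/f12} and \eqref{1/f13} I would exploit a cube-root-of-unity norm. With $\zeta=e^{2\pi i/3}$, if a series $F$ has a $2$-term dissection $F=A-2qB$ in which $A,B$ are functions of $q^3$, then $F(\zeta^jq)=A-2\zeta^jqB$, so $\prod_{j=0}^2F(\zeta^jq)=A^3-8q^3B^3$ is automatically a function of $q^3$, whence $1/F=(A^2+2qAB+4q^2B^2)/(A^3-8q^3B^3)$ is already in dissected form. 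Applied to $F=\varphi(-q)$ this yields \eqref{f2/f12}, the denominator being $\prod_j\varphi(-\zeta^jq)=f_3^8f_{18}/(f_6^4f_9^2)$, which I evaluate from the elementary factorizations $\prod_jf_1(\zeta^jq)=f_3^4/f_9$ and $\prod_jf_2(\zeta^jq)=f_6^4/f_{18}$ (obtained by grouping the factors $1-\zeta^{jm}q^m$ according to whether $3\mid m$). The identical device with cubes, using $\prod_jf_1(\zeta^jq)^3=f_3^{12}/f_9^3$, converts \eqref{f13d}, written as $f_1^3=M-3qN$ with $M$ its residue-$0$ part and $N=f_9^3$, into $1/f_1^3=(M^2+3qMN+9q^2N^2)/(f_3^{12}/f_9^3)$; expanding $M$ reproduces all six terms of \eqref{1/f13}.

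The main obstacle is the theta-to-eta identification of the one ``hard'' residue class in each primary, namely recognizing $\sum_{n\not\equiv1}q^{T_n}$, $\sum_{n\not\equiv0}(-1)^nq^{n^2}$, and the $m\equiv0,2$ portion of $\sum_m(-1)^m(2m+1)q^{T_m}$ as the stated products. These are Jacobi-triple-product evaluations and carry essentially all the real content. By contrast, everything downstream—the product \eqref{f1f2} and the two reciprocals—is automatic once the norm collapses the denominators to single eta-quotients, the only remaining effort being the routine term-by-term matching of products.
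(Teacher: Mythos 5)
Your proposal is correct, but it takes a genuinely different route from the paper: the paper does not prove any of these identities at all, it simply cites them piecemeal from four separate sources (Hirschhorn's book for \eqref{f12/f2} and \eqref{f22/f1_d}, \cite{HS05} for \eqref{f2/f12}, \cite{HS14} for \eqref{f1f2}, and \cite{NA24} for \eqref{f13d} and \eqref{1/f13}), whereas you organize the six identities into three ``primary'' theta dissections plus three algebraic consequences. Your reductions all check out: $\varphi(-q)\psi(q)=f_1f_2$ does reproduce \eqref{f1f2} term by term (the $q^1$ coefficients $f_9f_{18}-2f_9f_{18}$ collapse as you say); the root-of-unity norm $\prod_{j}F(\zeta^jq)$ with $\prod_jf_1(\zeta^jq)=f_3^4/f_9$ and $\prod_jf_2(\zeta^jq)=f_6^4/f_{18}$ gives exactly the denominators $f_3^8f_{18}/(f_6^4f_9^2)$ and $f_3^{12}/f_9^3$, and expanding $M^2+3qMN+9q^2N^2$ with $M=\frac{f_6f_9^6}{f_3f_{18}^3}+4q^3\frac{f_3^2f_{18}^6}{f_6^2f_9^3}$, $N=f_9^3$ yields all six terms of \eqref{1/f13} with the correct coefficients $1,3,9,8,12,16$. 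What your approach buys is economy and self-containment: only three theta-to-eta identifications carry real content, and everything else is formal. What it costs is that those three identifications are not all equally routine --- in particular the residue-$0$ part of Jacobi's series for $f_1^3$ is a sum over $m\equiv 0,2\pmod 3$ that must be recognized as a \emph{sum of two} eta-quotients, which is more delicate than a single triple-product evaluation (it is usually handled via the quintuple product or an equivalent pairing); you flag this correctly as the content-bearing step but slightly understate it by calling each remaining class ``a single theta series [recognized] as the displayed eta-quotient.'' Since you allow a citation fallback for exactly those steps, your argument is at least as rigorous as the paper's, and arguably more informative.
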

\begin{proof}
Equations~\eqref{f12/f2} and~\eqref{f22/f1_d} appear as~(14.3.2) and~(14.3.3) in~\cite{H2017}, respectively. Identity~\eqref{f2/f12} was proven in~\cite{HS05}, and~\cite{HS14} contains a proof of~\eqref{f1f2}. The identities~\eqref{f13d} and~\eqref{1/f13} can be found in~\cite[Lemma~3]{NA24}.
\end{proof}
Lastly, we need the following well--known fact which basically follows from the Binomial Theorem and divisibility properties of certain binomial coefficients.  
\begin{lemma}
    For a prime $p$ and positive integers $k$ and $l$,
    \begin{equation}\label{modp^k}
        f_l^{p^k}\equiv f_{lp}^{p^{k-1}}\pmod{p^k}.
    \end{equation}
\end{lemma}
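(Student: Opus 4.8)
The plan is to prove the congruence \eqref{modp^k} by induction on $k$, using as the main engine the elementary fact that raising to the $p$-th power improves the modulus of a congruence between integer power series by one additional factor of $p$. Throughout, all the series involved ($f_l$, $f_{lp}$, and their powers) have integer coefficients, so congruences are to be read coefficientwise.

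First I would isolate the key lifting step: if $A$ and $B$ are formal power series in $q$ with integer coefficients and $A\equiv B\pmod{p^r}$ for some $r\ge 1$, then $A^p\equiv B^p\pmod{p^{r+1}}$. To see this, write $A=B+p^rC$ with $C$ an integer power series and expand by the Binomial Theorem,
\[
A^p=\sum_{j=0}^{p}\binom{p}{j}B^{p-j}(p^rC)^j.
\]
The $j=0$ term is $B^p$; the $j=1$ term equals $p^{r+1}B^{p-1}C$; for $2\le j\le p-1$ the coefficient $\binom{p}{j}$ is divisible by $p$, so the term is divisible by $p^{rj+1}$ with $rj+1\ge 2r+1\ge r+1$; and the $j=p$ term is $p^{rp}C^p$ with $rp\ge r+1$ since $p\ge 2$ and $r\ge 1$. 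Hence every term with $j\ge 1$ is divisible by $p^{r+1}$, which gives $A^p\equiv B^p\pmod{p^{r+1}}$.

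Next I would establish the base case $k=1$. Since $\binom{p}{j}\equiv 0\pmod{p}$ for $1\le j\le p-1$, the Binomial Theorem yields $(1-x)^p\equiv 1-x^p\pmod p$. Applying this to each factor of $f_l=\prod_{m\ge 1}(1-q^{lm})$ gives
\[
f_l^{p}=\prod_{m\ge 1}(1-q^{lm})^p\equiv\prod_{m\ge 1}(1-q^{lpm})=f_{lp}\pmod p,
\]
which is exactly the case $k=1$ of \eqref{modp^k}. Finally I would run the induction: assuming $f_l^{p^{k-1}}\equiv f_{lp}^{p^{k-2}}\pmod{p^{k-1}}$ for some $k\ge 2$, I apply the lifting step with $A=f_l^{p^{k-1}}$, $B=f_{lp}^{p^{k-2}}$, and $r=k-1$ to obtain $f_l^{p^{k}}\equiv f_{lp}^{p^{k-1}}\pmod{p^{k}}$, completing the induction. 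The only point demanding care is the divisibility bookkeeping in the lifting step, where one must check that the $j=1$ cross term already carries the full factor $p^{r+1}$ and that every higher-order term is at least as divisible; once this is verified, the base case and induction follow with no further computation.
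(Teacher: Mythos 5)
Your proof is correct and is precisely the argument the paper has in mind: the paper offers no written proof, only the remark that the lemma ``basically follows from the Binomial Theorem and divisibility properties of certain binomial coefficients,'' and your base case ($(1-x)^p\equiv 1-x^p\pmod p$ applied factorwise) plus the $p$-th-power lifting step is the standard realization of exactly that idea. The divisibility bookkeeping in your lifting step checks out, so nothing further is needed.
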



\section{Proof of theorem \ref{conj}}
\label{sec:proof_of_first_theorem}

We begin by recalling the generating function
\begin{align*}
    \sum_{n=0}^\infty\overline{R_6^*}(n)q^n &=  \frac{f_2 f_6}{f_1^2}\\
    &=\left(\frac{f_6^4 f_9^6}{f_3^8 f_{18}^3} + 2q\,\frac{f_6^3 f_9^3}{f_3^7} + 4q^2\,\frac{f_6^2 f_{18}^3}{f_3^6}\right) f_6 \quad \text{(thanks to \eqref{f2/f12})}.
\end{align*}
Extracting the terms in which the exponents of $q$ are of the form $3n+2$, dividing both sides by $q^2$, and then replacing $q^3$ by $q$, we get

\begin{align}\label{R63n2}
	\sum_{n=0}^\infty \overline{R_6^*}(3n+2) q^n &= 4\frac{f_2^3 f_6^3}{f_1^6}\\
   \notag &=4\frac{f_2^3 f_6^3}{f_1^{32}}f_1^{26}\\
   \notag &\equiv4\frac{f_2^3f_6^3}{f_2^{16}}f_1^{26} \pmod{128} \quad \text{(thanks to \eqref{modp^k})}\\
   \notag &=4f_6^3\left(\frac{f_1^2}{f_2}\right)^{13}\\
   \notag & = 4f_6^3\left(\frac{f_9^2}{f_{18}}-2q\frac{f_3f_{18}^2}{f_6f_9}\right)^{13}.
\end{align}
Observing that $4(a-2b)^{13}\equiv 4a^{13}+24a^{12}b+96a^{11}b^2+64a^{10}b^3+64a^9b^4\pmod{128}$, extracting the terms in which the exponents of $q$ are of the form $3n$, we get 
\begin{align*}
   \sum_{n=0}^\infty \overline{R_6^*}(9n+2) q^{3n} &\equiv f_6^3\left(4\frac{f_9^{26}}{f_{18}^{13}}+64\frac{f_9^{20}}{f_{18}^{10}}\cdot q^3\frac{f_3^3f_{18}^6}{f_6^3f_9^3}\right)\pmod{128}.
\end{align*}
Replacing $q^3$ by $q$ gives

\begin{align}
   \label{R69n2128} \sum_{n=0}^\infty \overline{R_6^*}(9n+2) q^{n} \equiv \sum_{n=0}^{\infty}T_1(9n+2)q^n+\sum_{n=0}^{\infty}T_2(9n+2)q^n \pmod{128},
\end{align}
where 
\begin{align}
   \label{T1} \sum_{n=0}^{\infty}T_1(9n+2)q^n&=4\frac{f_2^3f_3^{26}}{f_6^{13}},\\
  \label{T2} \sum_{n=0}^{\infty} T_2(9n+2)q^n&= 64q\frac{f_1^3f_{3}^{17}}{f_6^4}\equiv 64qf_1^3f_3^9\pmod{128}\quad \text{(thanks to \eqref{modp^k})}.
\end{align}

For given $n\geq0$ and $k\geq0$, setting
$$l_{n,k}:=18\cdot3^{2k+1}n+\frac{153\cdot3^{2k}-1}{4},$$
we have $l_{n,k}\equiv2\pmod9$. To show that $\overline{R_6^*}(l_{n,k})\equiv0\pmod{128}$, by \eqref{R69n2128}, it suffices to prove the following two congruences 
\begin{align}
\label{T1l0128}T_1(l_{n,k})&\equiv0\pmod{128},\\
\label{T2l0128}T_2(l_{n,k})&\equiv0\pmod{128}.
\end{align}

\subsection*{Proof of \eqref{T1l0128}}

Using \eqref{f13} in \eqref{T1}, we get 
\begin{align}
   \label{T11} \sum_{n=0}^{\infty}T_1(9n+2)q^n=4\frac{f_3^{26}}{f_6^{13}}\left(\sum_{m\geq0}(-1)^m(2m+1)q^{m(m+1)}\right).
\end{align}
We now check whether $m(m+1)+3k=6n+4$ for some $m,n$ and $k$. Equivalently, $(2m+1)^2+12k=24n+17$. This is not possible since $5$ is a quadratic nonresidue modulo $12$. Thus, the right-hand side of \eqref{T11} does not contain terms in which the exponents of $q$ are of the form $6n+4$, and hence 
\begin{equation*}
\label{T16n4}
T_1(54n+38)\equiv0\pmod{128},
\end{equation*}
which implies that $T_1(l_{n,k})\equiv0\pmod{128}$ when $k=0$.

In order to show $T_1(l_{n,k})\equiv0\pmod{128}$ for $k\geq1$, we first establish the following claim. 
\begin{claim} \label{ClaimT1}
For $k\geq1$, we have 
\begin{align}
  \label{T1C}  \sum_{n=0}^{\infty}T_1\left(3^{2k+2}n+\frac{3^{2k+2}-1}{4}\right)q^n \equiv \pm32qf_1^3f_3^9-(16a+12)\cfrac{f_1^8f_3^{18}}{f_2f_6^9}\pmod{128} 
\end{align}
for some integer $a$. Here, $\pm$ indicates that ~\eqref{T1C} takes either the $+$ or the $-$ sign, not both at once.
\end{claim}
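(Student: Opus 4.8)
The plan is to prove Claim \ref{ClaimT1} by induction on $k$, viewing the passage from $\sum_n T_1(9n+2)q^n$ to $\sum_n T_1\!\left(3^{2k+2}n+\tfrac{3^{2k+2}-1}{4}\right)q^n$ as $k$ successive applications of one coefficient-extraction operator. Writing $G_0 := \sum_n T_1(9n+2)q^n = 4\frac{f_2^3 f_3^{26}}{f_6^{13}}$ from \eqref{T1}, and letting $U$ denote the operator sending $\sum_n c_n q^n$ to $\sum_n c_{9n+2}\,q^n$, one checks directly that $U^k(G_0)$ equals the left-hand side of \eqref{T1C}. Since $9n+2 = 3(3n)+2$, the operator $U$ factors as extraction of the $q$-exponents in residue class $2$ modulo $3$, followed by extraction of residue class $0$ modulo $3$; in practice each extraction is executed with the $3$-dissection identities \eqref{f12/f2}--\eqref{1/f13} together with \eqref{f13d}. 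Setting $A := q f_1^3 f_3^9$ and $B := \frac{f_1^8 f_3^{18}}{f_2 f_6^9}$, the claim asserts $U^k(G_0) \equiv \varepsilon\,32\,A - (16a+12)B \pmod{128}$ with $\varepsilon\in\{+1,-1\}$ and $a\in\mathbb{Z}$. Equivalently, the induction hypothesis is that $U^k(G_0)$ is congruent mod $128$ to an integer combination of $A$ and $B$ whose $A$-coefficient is $\equiv 32 \pmod{64}$ and whose $B$-coefficient is $\equiv 12 \pmod{16}$.

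For the base case $k=1$ I would apply $U$ to $G_0 = 4\frac{f_2^3 f_3^{26}}{f_6^{13}}$. Because of the prefactor $4$, only the reduction of $\frac{f_2^3 f_3^{26}}{f_6^{13}}$ modulo $32$ is needed; here \eqref{modp^k} (with $p=2$) trims the large powers of $f_3$, after which one separates the factors built from $q,q^2$ from those built from $q^3,q^6$, dissects the former with the base-$q$ identities, extracts the residue-$2$-mod-$9$ part, and simplifies. The outcome should be exactly $\pm 32A - (16a+12)B$ for a specific sign and value of $a$, establishing the hypothesis at $k=1$.

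For the inductive step I assume $U^k(G_0) \equiv \pm 32A - (16a+12)B \pmod{128}$ and apply the coefficient-linear operator $U$. The whole computation reduces to two ``transfer'' evaluations: $U(A)$ modulo $4$ and $U(B)$ modulo $32$. These reduced precisions suffice because the $A$-term carries the factor $32=2^5$ (so $32\,U(A)\bmod 128$ sees $U(A)$ only mod $4$), while $16a+12=4(4a+3)$ has $4a+3$ a unit (so $4(4a+3)U(B)\bmod 128$ sees $U(B)$ only mod $32$). The crux is that both $U(A)\bmod 4$ and $U(B)\bmod 32$ again fall in the span of $A$ and $B$, say $U(A)\equiv \gamma A+\delta B\pmod 4$ and $U(B)\equiv \alpha A+\beta B\pmod{32}$; granting this, a short $2$-adic bookkeeping shows the output coefficients again satisfy the required congruences precisely when $\gamma$ is even, $\alpha\equiv 8\pmod{16}$, and $\beta\equiv 3\pmod 4$. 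In particular the new $\pm 32A$ term is regenerated from the previous $B$-term (since $4(4a+3)\cdot 8\equiv 32\pmod{64}$), while the previous $32A$-term only feeds multiples of $32$ back into the two coefficients and is harmlessly absorbed into the freedom in $\varepsilon$ and $a$.

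The main obstacle is the transfer computation for $B$ modulo $32$. Unlike the high even powers handled cleanly by \eqref{modp^k}, the factor $f_1^8$ only satisfies $f_1^8\equiv f_2^4\pmod 8$, so obtaining $B$, and then $U(B)$, to the full precision $32$ requires carrying the correction terms past the naive reduction, performing the double $3$-dissection on the resulting eta-quotients, and verifying both that nothing outside $\langle A,B\rangle$ survives modulo $32$ and that the emergent $A$-coefficient is exactly $\equiv 8\pmod{16}$. This last congruence is what recreates the $\pm 32A$ term and keeps the induction self-sustaining. Once these transfer relations are established, assembling the inductive step and reading off the two coefficient congruences is routine.
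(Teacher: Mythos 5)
Your overall architecture coincides with the paper's: induction on $k$, with the inductive step realized as one application of the extraction operator $U:\sum c_nq^n\mapsto\sum c_{9n+2}q^n$ (carried out as a $3$-adic double dissection via \eqref{f12/f2}, \eqref{f13d}, \eqref{1/f13} and \eqref{modp^k}), together with the observation that everything stays, modulo $128$, in the span of $A=qf_1^3f_3^9$ and $B=f_1^8f_3^{18}/(f_2f_6^9)$, and that by linearity one only needs $U(A)$ modulo $4$ and $U(B)$ modulo $32$. That reduction is sound and is implicitly what the paper does.

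The gap is that all of the substantive computations are deferred, and the ``transfer conditions'' you prescribe are not the ones that actually hold, so executing your program as written would appear to refute the claim. First, there is a sign slip in your reformulation: the $B$-coefficient in \eqref{T1C} is $-(16a+12)\equiv 4\pmod{16}$, not $12\pmod{16}$, and this is what turns your requirement on $\beta$ into $\beta\equiv 3\pmod 4$ instead of the correct $\beta\equiv 1\pmod 4$. Second, the dissections actually give $U(A)\equiv A+B\pmod 4$ and $U(B)\equiv 16A+21B\pmod{32}$, i.e.\ $\gamma$ is \emph{odd} (not even), $\alpha\equiv 0\pmod{16}$ (not $8$), and $\beta\equiv 1\pmod 4$. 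Hence the mechanism is the reverse of your narrative: the new $\pm32A$ term is regenerated from the old $A$-term with the same nominal sign, while the old $B$-term contributes an extra $64A$ (since $-(16a+12)\cdot 48\equiv 64\pmod{128}$), and it is this $64A$ that flips the sign to $\mp32$. Concretely, the induction closes because
\begin{equation*}
U\bigl(\pm32A-(16a+12)B\bigr)\equiv \mp32A-\bigl(21(16a+12)\mp32\bigr)B\pmod{128},
\end{equation*}
with $21(16a+12)\mp32$ again of the form $16a'+12$. Since you present your conditions as holding ``precisely when'' the induction self-sustains, they are not merely unverified but false as stated; the remedy is to perform the two dissection computations (and the base case $k=1$, which yields the sign $-$ and $a=0$) and let them dictate the coefficient congruences rather than prescribing them in advance.
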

\begin{proof}[Proof of Claim \ref{ClaimT1}] We prove this by induction on $k$. Applying \eqref{modp^k} to \eqref{T1}, we get
\begin{align*}
   \sum_{n=0}^{\infty}T_1(9n+2)q^n&\equiv4\frac{f_2^3f_6^3}{f_3^6} \pmod{128}\\
   &=4 \left(\frac{f_{12}f_{18}^6}{f_6f_{36}^3}-3q^2f_{18}^3+4q^6\frac{f_6^2f_{36}^6}{f_{12}^2f_{18}^3}\right)\frac{f_6^3}{f_3^6} \quad \text{(thanks to \eqref{f13d})}.
\end{align*}
Extracting the terms which contain the form $q^{3n+2}$, dividing by $q^2$, and replacing $q^3$ by $q$, we get 
\begin{align*}
    \sum_{n=0}^{\infty}T_1(27n+20)q^n&\equiv-12f_6^3\left(\frac{f_2}{f_1^2}\right)^3 \pmod{128}\\
    &= -12f_6^3\left(\frac{f_6^4f_9^6}{f_3^8f_{18}^3}+2q\frac{f_6^3f_9^3}{f_3^7}+4q^2\frac{f_6^2f_{18}^3}{f_3^6}\right)^3 \quad \text{(using \eqref{f2/f12})}.
\end{align*}
Extracting the terms that contain exponents of $q$ of the form $3n$ gives

\begin{align*}
    \sum_{n=0}^{\infty}T_1(81n+20)q^{3n}\equiv-12f_6^3\left(\frac{f_6^{12}f_9^{18}}{f_3^{24}f_{18}^9}+56q^3\frac{f_6^9f_9^9}{f_3^{21}}\right) \pmod{128}.
\end{align*}
We replace $q^{3}$ by $q$ to obtain
\begin{align*}
\notag\sum_{n=0}^{\infty}T_1\left(3^4n+\frac{3^4-1}{4}\right)q^n&\equiv-32q\frac{f_2^{12}f_3^9}{f_1^{21}}-12\frac{f_2^{15}f_3^{18}}{f_1^{24}f_{6}^9} \pmod{128}\\
&\equiv-32qf_1^{3}f_3^9-12\frac{f_2^{15}f_3^{18}}{f_1^{24}f_{6}^9} \pmod{128},
 \end{align*}
where the last congruence follows applying \eqref{modp^k}. This establishes the claim for $k=1$.

Suppose that \eqref{T1C} holds for a fixed $k$. Then, we show that \eqref{T1C} holds for $k+1$. From \eqref{T1C}, we have
\begin{align*}
   &\sum_{n=0}^{\infty}T_1\left(3^{2k+2}n+\frac{3^{2k+2}-1}{4}\right)q^n\\
    &\equiv \pm32qf_1^3f_3^9-(16a+12)\cfrac{f_1^8f_3^{18}}{f_2f_6^9}\pmod{128}\\
    &=\pm32qf_1^3f_3^9-(16a+12)(f_1^3)^2\frac{f_1^2}{f_2}\frac{f_3^{18}}{f_6^9}\\
    &=\pm32qf_3^9\left(\frac{f_6f_9^6}{f_3f_{18}^3}-3qf_9^3+4q^3\frac{f_3^2f_{18}^6}{f_6^2f_9^3}\right)-(16a+12)\left(\frac{f_6f_9^6}{f_3f_{18}^3}-3qf_9^3+4q^3\frac{f_3^2f_{18}^6}{f_6^2f_9^3}\right)^2\\
    &\times\left(\frac{f_9^2}{f_{18}}-2q\frac{f_3f_{18}^2}{f_6f_9}\right)\frac{f_3^{18}}{f_6^9} \quad \text{(using \eqref{f13d} and \eqref{f12/f2}}).
\end{align*}
We extract the terms in which the exponents of $q$ are of the form $3n+2$ to obtain
\begin{align*}
&\sum_{n=0}^{\infty}T_1\left(3^{2k+2}(3n+2)+\frac{3^{2k+2}-1}{4}\right)q^{3n+2} \\
 &\equiv \pm32 q^2f_3^9f_9^3-(16a+12)\left(21q^2\frac{f_3^{18}f_9^8}{f_6^9f_{18}}+48q^5\frac{f_3^{21}f_{18}^8}{f_6^{12}f_9}\right) 
 \pmod{128}.
\end{align*}
Dividing by $q^2$ and replacing $q^{3}$ by $q$ yields
\begin{align*}
&\sum_{n=0}^{\infty}T_1\left(3^{2k+3}n+\frac{3^{2k+4}-1}{4}\right)q^n\\
 &\equiv \pm32f_1^9f_3^3-21(16a+12)\frac{f_1^{18}f_3^8}{f_2^9f_{6}}+64q\frac{f_1^{21}f_{6}^8}{f_2^{12}f_3}
 \pmod{128}\\
 &\equiv\pm32f_1^9f_3^3-21(16a+12)\left(\frac{f_1^{2}}{f_2}\right)^9\frac{f_3^8}{f_6}+64q\frac{f_{6}^8}{f_1^3f_3} \pmod{128} \quad \text{(thanks to \eqref{modp^k})}\\
 &\equiv\pm32f_3^3\left(\frac{f_6f_9^6}{f_3f_{18}^3}-3qf_9^3\right)^3-21(16a+12)\left(\frac{f_9^2}{f_{18}}-2q\frac{f_3f_{18}^2}{f_6f_9}\right)^9\frac{f_3^8}{f_6}\\
 &+64q\frac{f_6^8}{f_3}\left(\frac{f_6^2f_9^{15}}{f_3^{14}f_{18}^6}+3q\frac{f_6f_9^{12}}{f_3^{13}f_{18}^3}+9q^2\frac{f_9^9}{f_3^{12}}\right) \pmod{128} \quad \text{(using \eqref{f12/f2}, \eqref{f13d}, \eqref{1/f13})}.
\end{align*}
We observe that $12(x-2y)^9\equiv12(x^9-18x^8y+16x^7y^2)\pmod{128}$. So, extracting the terms that contain the form $q^{3n}$, we get 
\begin{align*}
    &\sum_{n=0}^{\infty}T_1\left(3^{2k+3}(3n)+\frac{3^{2k+4}-1}{4}\right)q^{3n}\\
    &\equiv \pm32\left(\frac{f_6^3f_9^{18}}{f_{18}^9}-27q^3f_3^3f_9^9\right)-21(16a+12)\frac{f_3^8f_9^{18}}{f_6f_{18}^9}+64q^3\frac{f_6^8f_9^9}{f_3^{13}} \pmod{128}.
\end{align*}
We replace $q^3$ by $q$ to obtain
\begin{align*}
&\sum_{n=0}^{\infty}T_1\left(3^{2k+4}n+\frac{3^{2k+4}-1}{4}\right)q^{n}\\
    &\equiv \pm32\frac{f_2^3f_3^{18}}{f_{6}^9}\pm32qf_1^3f_3^9-21(16a+12)\frac{f_1^8f_3^{18}}{f_2f_{6}^9}+64q\frac{f_2^8f_3^9}{f_1^{13}} \pmod{128}\\
    &\equiv \pm32\frac{f_1^8f_3^{18}}{f_2f_6^9}\pm32qf_1^3f_3^9-21(16a+12)\frac{f_1^8f_3^{18}}{f_2f_{6}^9}+64qf_1^3f_3^9 \pmod{128} \quad \text{(by \eqref{modp^k})}\\
    &=\mp32qf_1^3f_3^9-\big(21(16a+12)\mp32\big)\frac{f_1^8f_3^{18}}{f_2f_6^9}\pmod{128}.
\end{align*}
Note that $21(16a+12)+32\equiv16(5a+1)+12 \pmod{128}$ and $21(16a+12)-32\equiv16(5a+5)+12 \pmod{128}$. This completes both the induction and proof of the claim.
\end{proof}

From \eqref{T1C}, for $k\geq1$, we have 
\begin{align*}
 &\sum_{n=0}^{\infty}T_1\left(3^{2k+2}n+\frac{3^{2k+2}-1}{4}\right)q^n\\
    &\equiv \pm32qf_1^3f_3^9-(16a+12)(f_1^3)^2\frac{f_1^2}{f_2}\frac{f_3^{18}}{f_6^9}\pmod{128}\\
    &=\pm32qf_3^9\left(\frac{f_6f_9^6}{f_3f_{18}^3}-3qf_9^3+4q^3\frac{f_3^2f_{18}^6}{f_6^2f_9^3}\right)-(16a+12)\left(\frac{f_6f_9^6}{f_3f_{18}^3}-3qf_9^3+4q^3\frac{f_3^2f_{18}^6}{f_6^2f_9^3}\right)^2\\
    &\times\left(\frac{f_9^2}{f_{18}}-2q\frac{f_3f_{18}^2}{f_6f_9}\right)\frac{f_3^{18}}{f_6^9} \quad \text{(using \eqref{f13d} and \eqref{f12/f2}}).
\end{align*}
Extracting the terms that contain the form $q^{3n+1}$, we get 
\begin{align*}
 &\sum_{n=0}^{\infty}T_1\left(3^{2k+2}(3n+1)+\frac{3^{2k+2}-1}{4}\right)q^{3n+1}\\
 &\equiv \pm32q\frac{f_3^8f_6f_9^6}{f_{18}^3}-(16a+12)\left(-40q^4\frac{f_3^{20}f_9^2f_{18}^5}{f_6^{11}}-8q\frac{f_3^{17}f_9^{11}}{f_6^8f_{18}^4}\right) \pmod{128}.
\end{align*}
Dividing by $q$ and replacing $q^3$ by $q$ gives 
\begin{align*}
&\sum_{n=0}^{\infty}T_1\left(3^{2k+3}n+\frac{5\cdot3^{2k+2}-1}{4}\right)q^n\\
&\equiv \pm32\frac{f_1^8f_2f_3^6}{f_6^3}-32q\frac{f_1^{20}f_3^2f_{6}^5}{f_2^{11}}-32\frac{f_1^{17}f_3^{11}}{f_2^8f_{6}^4}\pmod{128}\\
&\equiv 32f_3^2\left(\pm\frac{f_2^5}{f_6}-q\frac{f_6^5}{f_2}-\frac{f_1}{f_3^3}f_6^2\right) \pmod{128} \quad \text{thanks to \eqref{modp^k}}\\
&=32f_3^2\left(\pm\frac{f_2^5}{f_6}-q\frac{f_6^5}{f_2}-\left(\frac{f_2f_4^2f_{12}^2}{f_6^7}-q\frac{f_2^3f_{12}^6}{f_4^2f_6^9}\right)f_6^2\right) \pmod{128} \quad \text{(using \eqref{f1/f33})}\\
&\equiv 32f_3^2\left(\pm\frac{f_2^5}{f_6}-\frac{f_2^5}{f_6}\right) \pmod{128} \quad \text{(thanks to \eqref{modp^k})}\\
&\equiv\begin{cases} 0 \pmod{128}&\text{when taking positive sign},\\
-64f_2^5 \pmod{128} &\text{when taking negative sign and applying \eqref{modp^k}}.
\end{cases}
\end{align*}
Observe that the right-hand side of the last congruence contains no terms that contain odd powers of $q$. Thus, for $n\geq0$ and $k\geq1$, we have 
\begin{align*}
T_1\left(3^{2k+3}(2n+1)+\frac{5\cdot3^{2k+2}-1}{4}\right)\equiv 0\pmod{128},
\end{align*}
where
$$3^{2k+3}(2n+1)+\frac{5\cdot3^{2k+2}-1}{4}=18\cdot3^{2k+1}n+\frac{153\cdot3^{2k}-1}{4}=l_{n,k}.$$

\subsection*{Proof of \eqref{T2l0128}}
We now establish the following claim for $T_2$.
\begin{claim}\label{T2C} For $n\geq0$ and $k\geq0$, we have
\begin{align}
   \label{T2final} \sum_{n=0}^{\infty} T_2\left(2\cdot3^{2k+2}n+\frac{3^{2k+2}-1}{4}\right)q^{n} &\equiv
    \lambda 64f_1^3+64qf_3^3f_6^3 \pmod{128} \vspace{0.2cm},
\end{align}
where $\lambda=0 \text{ or } 1$.
\end{claim}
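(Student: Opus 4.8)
The plan is to prove the stated congruence \eqref{T2final} by induction on $k$, exploiting the fact that every term carries a factor of $64$. Since $64X\equiv64Y\pmod{128}$ if and only if $X\equiv Y\pmod 2$, I only ever need to control the relevant eta-quotients modulo $2$. In this regime \eqref{modp^k} with $p=2$ gives $f_{2l}\equiv f_l^2\pmod 2$, and two consequences will be used repeatedly: $f_1^3\equiv f_2^2/f_1\pmod 2$ (equivalently $f_1^4\equiv f_2^2$) together with $f_4\equiv f_2^2\pmod 2$, which yield the key simplification $f_4/f_1\equiv f_1^3\pmod2$. This same reduction explains why applying \eqref{f13d} to $64f_1^3$ kills the $4q^3$ term and turns $-3q$ into $+q$, i.e. $64f_1^3\equiv 64\bigl(\tfrac{f_6f_9^6}{f_3f_{18}^3}+qf_9^3\bigr)\pmod{128}$.

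For the base case $k=0$ I start from \eqref{T2}, namely $\sum_n T_2(9n+2)q^n\equiv64qf_1^3f_3^9\pmod{128}$. Since $18n+2=9(2n)+2$, I extract the even-index terms and replace $q^2$ by $q$, a single $2$-dissection. Modulo $2$ this amounts to dissecting $qf_1^3f_3^9\equiv q\,f_1f_2f_3f_{24}$ (using $f_1^3\equiv f_1f_2$ and $f_3^9\equiv f_3f_{24}$); as $f_2f_{24}$ already lives in even powers, the even part is governed by the odd part of $f_1f_3$, and after the substitution this collapses to $64qf_3^3f_6^3$, establishing the claim with $\lambda=0$.

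For the inductive step I note $\tfrac{3^{2k+4}-1}{4}=2\cdot3^{2k+2}+\tfrac{3^{2k+2}-1}{4}$, so that $2\cdot3^{2k+4}n+\tfrac{3^{2k+4}-1}{4}=2\cdot3^{2k+2}(9n+1)+\tfrac{3^{2k+2}-1}{4}$; passing from level $k$ to level $k+1$ therefore means selecting the exponents $\equiv1\pmod9$, which I realize as two successive $3$-dissections. Starting from the inductive hypothesis $64\lambda f_1^3+64qf_3^3f_6^3$, I apply \eqref{f13d} to $f_1^3$ and observe that $qf_3^3f_6^3$ already sits in exponents $\equiv1\pmod3$; extracting the $\equiv1\pmod3$ part and rescaling gives $64\lambda f_3^3+64f_1^3f_2^3$. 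Applying \eqref{f13d} to both $f_1^3$ and $f_2^3$ (the latter via $q\mapsto q^2$) and extracting the $\equiv0\pmod3$ part then produces
\[
64\lambda f_1^3+64\,\frac{f_3^6f_4f_6^3}{f_1f_{12}^3}+64qf_3^3f_6^3 \pmod{128}.
\]

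The final move is the mod-$2$ computation $\tfrac{f_3^6f_4f_6^3}{f_1f_{12}^3}\equiv\tfrac{f_4}{f_1}\equiv f_1^3\pmod2$ (via $f_3^6\equiv f_6^3$, $f_6^6\equiv f_{12}^3$, and the key simplification above), which converts the middle term into $64f_1^3$ and hence gives $64(\lambda+1)f_1^3+64qf_3^3f_6^3$, closing the induction with new parameter $\lambda+1\bmod2\in\{0,1\}$. I expect the base case to be the main obstacle: unlike the inductive step it is a $2$-dissection rather than a $3$-dissection, so the $3$-dissection identities \eqref{f12/f2}–\eqref{1/f13} do not apply directly, and one must either invoke the standard $2$-dissection of $f_1f_3$ or dissect $f_1^3f_3^9$ modulo $2$ by hand in order to pin down both the surviving quotient $f_3^3f_6^3$ and the value $\lambda=0$.
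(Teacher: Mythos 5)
Your proposal is correct and follows the same overall strategy as the paper: induction on $k$, the key observation that the global factor $64$ reduces every verification to arithmetic modulo $2$, and two successive $3$-dissections (extract exponents $\equiv 1 \pmod 3$, then $\equiv 0 \pmod 3$) for the inductive step, arriving at the same recursion $\lambda \mapsto \lambda+1 \bmod 2$. Two points differ. In the second stage of the inductive step you apply \eqref{f13d} to $f_1^3$ and to $f_2^3$ separately, whereas the paper cubes the $3$-dissection \eqref{f1f2} of $f_1f_2$ and uses $64(a-b-2c)^3\equiv 64(a^3+a^2b+ab^2+b^3)\pmod{128}$; your intermediate quotient $f_3^6f_4f_6^3/(f_1f_{12}^3)$ and the paper's $f_2^3f_3^{12}/(f_1^3f_6^6)$ both collapse to $f_1^3$ modulo $2$, so this is a harmless variant. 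In the base case, however, you rewrite $qf_1^3f_3^9$ as $qf_1f_2f_3f_{24}$ and appeal to the odd part of the $2$-dissection of $f_1f_3$, which is not among the paper's stated lemmas and whose outcome you assert rather than compute. The assertion is right (with the standard $2$-dissection of $f_1f_3$ the surviving even-exponent part reduces, after $q^2\mapsto q$ and mod $2$ simplification, to $qf_3f_{24}\equiv qf_3^3f_6^3$), but the paper's route closes this more economically using only its own toolkit: write $64qf_1^3f_3^9\equiv 64q(f_1^3/f_3)f_6^5$ via \eqref{modp^k} and apply the listed $2$-dissection \eqref{f13/f3}, whose even part immediately yields $64qf_1^2f_3^3f_6^3/f_2\equiv 64qf_3^3f_6^3\pmod{128}$. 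With that single substitution (or with the $f_1f_3$ dissection written out) your argument is complete.
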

\begin{proof}[Proof of Claim \eqref{T2C}]
We prove the claim by induction on $k$. Applying \eqref{modp^k} to \eqref{T2}, we get 
\begin{align*}
  \sum_{n=0}^{\infty} T_2(9n+2)q^n&\equiv 64q\frac{f_1^3}{f_3}f_6^5 \pmod{128}\\
  &= 64q\left(\frac{f_4^3}{f_{12}}-3q\frac{f_2^2f_{12}^3}{f_4f_6^2}\right)f_6^5 \quad \text{(using \eqref{f13/f3})}.
\end{align*}
We extract the terms that contain even powers of $q$ and then replace $q^2$ by $q$ to obtain 
\begin{align*}
   \notag \sum_{n=0}^{\infty} T_2\left(2\cdot3^{2}n+\frac{3^{2}-1}{4}\right)q^{n}&\equiv64q\frac{f_1^2f_3^3f_6^3}{f_2} \pmod{128}\\
  &\equiv 64qf_3^3f_6^3 \pmod{128} \quad \text{(thanks to \eqref{modp^k})}.
\end{align*}
This establishes the claim for $k=0$.

Suppose that \eqref{T2final} holds for a fixed $k$. We show that it also holds for $k+1$. From \eqref{T2final}, we have
\begin{align*}
   \sum_{n=0}^{\infty} T_2\left(2\cdot3^{2k+2}n+\frac{3^{2k+2}-1}{4}\right)q^{n} &\equiv
    \lambda64f_1^3+64qf_3^3f_6^3 \pmod{128}\\
    &=\lambda64\left(\frac{f_6f_9^6}{f_3f_{18}^3}-3qf_9^3+4q^3\frac{f_3^2f_{18}^6}{f_6^2f_9^3}\right)+64qf_3^3f_6^3,
\end{align*}where the last equality follows using \eqref{f13d}.
Extracting the terms in which the exponents of $q$ are of the form $3n+1$, dividing by $q$ and replacing $q^3$ by $q$, we get
\begin{align*}
   \sum_{n=0}^{\infty} T_2\left(2\cdot3^{2k+3}n+\frac{3^{2k+4}-1}{4}\right)q^{n}&\equiv
    \lambda64f_3^3+64f_1^3f_2^3 \pmod{128}\\
    &=\lambda64 f_3^3+64\left(\frac{f_6f_9^4}{f_3f_{18}^2}-qf_9f_{18}-2q^2\frac{f_3f_{18}^4}{f_6f_9^2}\right)^3,
\end{align*}
using \eqref{f1f2}. Observe that $64(a-b-2c)^3\equiv 64(a^3+a^2b+ab^2+b^3)\pmod{128}$. We extract the terms that contain exponents of $q$ of the form $3n$ and replace $q^3$ by $q$ to get 
\begin{align*}
   \sum_{n=0}^{\infty} T_2\left(2\cdot3^{2k+4}n+\frac{3^{2k+4}-1}{4}\right)q^{n} &\equiv \lambda 64 f_1^3+64\left(\frac{f_2^3f_3^{12}}{f_1^3f_{6}^6}+qf_3^3f_{6}^3\right) \pmod{128}\\
   &\equiv \lambda 64 f_1^3+64\left(f_1^3+qf_3^3f_{6}^3\right) \pmod{128}\\
   & \quad \text{(using \eqref{modp^k}}) \\
   \notag &\equiv \begin{cases} 64qf_3^3f_6^3 \pmod{128} &\text{if $\lambda=1$}, \\ 
   64f_1^3+64qf_3^3f_6^3 \pmod{128} &\text{if $\lambda=0$}.
   \end{cases}
\end{align*}
This shows that \eqref{T2final} holds for $k+1$ and completes the proof of the claim.
\end{proof}

Since, from \eqref{f13d}, 
$$f_1^3=\frac{f_6f_9^6}{f_3f_{18}^3}-3qf_9^3+4q^3\frac{f_3^2f_{18}^6}{f_6^2f_9^3},$$
the right-hand sides in \eqref{T2final} do not contain terms in which the exponents of $q$ are of the form $3n+2$. Therefore, for all $n\geq0$ and $k\geq0$, we have
\begin{align*}
  T_2\left(2\cdot3^{2k+2}(3n+2)+\frac{3^{2k+2}-1}{4}\right)
  &\equiv0 \pmod{128},
\end{align*}
where 
$$2\cdot3^{2k+2}(3n+2)+\frac{3^{2k+2}-1}{4}=18\cdot3^{2k+1}n+\frac{153\cdot3^{2k}-1}{4}=l_{n,k}.$$ \qed

\section{Proof of Theorem \ref{TR36}}
\label{sec:Proof_of_Second_Theorem}
We close this work by quickly providing elementary proofs of the congruences in Theorem \ref{TR36}.  These rely on our generating function manipulations above, and follow from a straightforward analysis of the dissections in question.  
\subsection*{Proof of $\eqref{627n1164}$}
From \eqref{R63n2}, we have 
\begin{align*}
	\sum_{n=0}^\infty \overline{R_6^*}(3n+2) q^n &=4\frac{f_1^{10}f_2^3f_6^3}{f_1^{16}}\\
    &\equiv 4 \frac{f_6^3 f_1^{10}}{f_2^5} \pmod{64} \quad \text{(applying \eqref{modp^k})} \\
	&= 4 f_6^3 \left( \frac{f_9^2}{f_{18}} - 2q\,\frac{f_3 f_{18}^2}{f_6 f_9} \right)^5 \quad \text{(thanks to $\eqref{f12/f2}$)}.
\end{align*}
As $4(a-2b)^5\equiv 4(a^5+6a^4b+8a^3b^2)\pmod{64}$, extracting the terms with exponents divisible by $3$ gives
\begin{align*}
	\sum_{n=0}^\infty \overline{R_6^*}(9n+2) q^{3n}
	&\equiv 4 f_6^3 \frac{f_9^{10}}{f_{18}^5} \pmod{64}.
\end{align*}
Replacing $q^3$ by $q$ yields
\begin{align*}
	\sum_{n=0}^\infty \overline{R_6^*}(9n+2) q^n &\equiv 4 f_2^3 \frac{f_3^{10}}{f_6^5} \pmod{64} \\
	&= 4 \left( \sum_{m \geq 0} (-1)^m (2m+1) q^{m(m+1)} \right) \frac{f_3^{10}}{f_6^5} \quad\text{(using \eqref{f13})}.
\end{align*}
The proof will be completed by showing that there exist no integers $m$ and $n$ satisfying
\[
m(m+1)=3n+1,
\]
or,
\[
(2m+1)^2=12n+5.
\]
Since $5$ is not a quadratic residue modulo $12$, no such integers exist.

\subsection*{Proof of $\eqref{681n4724}$}
Again from \eqref{R63n2}, we have 
\begin{align*}
	\sum_{n=0}^\infty \overline{R_6^*}(3n+2) q^n &=4\frac{f_2^3f_6^3}{f_1^{6}}\\
    &\equiv 4 \frac{f_2^3f_6^3 }{f_2^3} \pmod{8} \quad \text{(applying \eqref{modp^k})} \\
	&= 4 f_6^3.
\end{align*}
By extracting the terms of the form $q^{3n}$ and replacing $q^3$ by $q$, we obtain
\begin{align*}
	\sum_{n=0}^\infty \overline{R_6^*}(9n+2) q^n &\equiv 4f_2^3 \pmod{8} \\
	&= \sum_{m\ge 0} (-1)^m (2m+1)q^{m(m+1)} \quad \text{(thanks to \eqref{f13})}.
\end{align*}
We claim that there exist no integers $m$ and $n$ satisfying
\[
9n+5 = m(m+1),
\]
or equivalently,
\[
(2m+1)^2=36n+21.
\]
Since $21$ is not a quadratic residue modulo $36$, no such integers exist. It follows that
\begin{equation}\label{681n478}
	\overline{R_6^*}(81n+47) \equiv 0 \pmod{8}.
\end{equation}

Next, we apply \eqref{modp^k} to \eqref{R63n2} to  deduce that
\[
\sum_{n=0}^\infty \overline{R_6^*}(3n+2) q^n \equiv 4\frac{f_6^4}{f_3^2} \pmod{3}.
\]
Then,
\begin{align*}
	\sum_{n=0}^\infty \overline{R_6^*}(9n+2) q^n &\equiv 4\frac{f_2^4}{f_1^2} \pmod{3} \\
	&= 4\left(\frac{f_6f_9^2}{f_3f_{18}}+q\,\frac{f_{18}^2}{f_9}\right)^2 \quad \text{(using $\eqref{f22/f1_d}$)}.
\end{align*}
Extracting the terms with exponents of $q$ are of the form ${3n+2}$, dividing by $q^2$, and replacing $q^3$ by $q$ gives
\[
\sum_{n=0}^\infty \overline{R_6^*}(27n+20) q^n \equiv 4\frac{f_6^4}{f_3^2} \pmod{3}.
\]
Since the resulting series is expressed in terms of $q^3$, and therefore cannot contain any terms of the form $q^{3n+1}$, we conclude that
\begin{equation}\label{681n473}
	\overline{R_6^*}(27(3n+1)+20)=\overline{R_6^*}(81n+47) \equiv 0 \pmod{3}.
\end{equation}
Combining \eqref{681n478} and \eqref{681n473} completes the proof of $\eqref{681n4724}$. \qed

As we close, it is worth noting that the proof above can be modified in straightforward fashion to prove that, for all $n\geq 0$, 
$$
\overline{R_6^*}(27(3n+2)+20)=\overline{R_6^*}(81n+74) \equiv 0 \pmod{24},
$$
a result which was not mentioned in \cite{Alanazi-Munagi-Saikia}.  

\bibliographystyle{plain}
\nocite{*}
\bibliography{Overpartitions6-regular.bib}

\end{document}